\newtheorem{theorem}{Theorem}
\newtheorem{lemma}[theorem]{Lemma}
\newtheorem{corollary}[theorem]{Corollary}
\newtheorem{fact}[theorem]{Fact}
\newtheorem{prop}[theorem]{Proposition}
\newtheorem*{claim*}{Claim}
\theoremstyle{remark}
\newtheorem*{remark*}{Remark}
\numberwithin{theorem}{section}
\renewcommand{\phi}{\varphi}
\renewcommand{\emptyset}{\varnothing}
\renewcommand{\leq}{\le}
\renewcommand{\geq}{\ge}
\newcommand{\cE}{\mathcal E}
\newcommand{\cM}{\mathcal M}
\newcommand{\E}{\bE}
\newcommand{\one}{{\mathbf{1}}}
\newcommand{\bE}{\mathbb E}
\renewcommand{\le}{\leqslant}
\renewcommand{\ge}{\geqslant}
\newcommand{\R}{\mathbb R}
\newcommand{\C}{\mathbb{C}}
\newcommand{\bN}{\mathbb N}
\newcommand{\Chat}{\widehat{\C}}
\newcommand{\mhat}{\widehat{m}}
\renewcommand{\P}{\mathbb{P}}
\newcommand{\nuhat}{\widehat{\nu}}
\title
[]{Almost sure behavior of the zeros of iterated derivatives of random polynomials}
\author{Marcus Michelen}
\author{Xuan-Truong Vu}
\begin{document}
	\begin{abstract}
 Let $Z_1,\, Z_2,\dots$ be independent and identically distributed complex random variables with common distribution $\mu$ and set 
$$ 
P_n(z) := (z - Z_1)\cdots (z - Z_n)\,.
$$
 Recently, Angst, Malicet and Poly proved that the critical points of $P_n$ converge in an almost-sure sense to the measure $\mu$ as $n$ tends to infinity, thereby confirming a conjecture of Cheung-Ng-Yam and Kabluchko. 
 In this short note, we prove for any fixed $k\in \mathbb{N}$, the empirical measure of zeros of the $k$th derivative of $P_n$ converges to $\mu$ in the almost sure sense, as conjectured by Angst-Malicet-Poly.
	\end{abstract}	
\maketitle

\section{Introduction}

Let $\mu$ be a probability measure on $\C$ and let $(Z_j)_{j\geq 1}$ be independent and identically distributed (i.i.d.) complex random variables with distribution $\mu$.  Define the sequence of random polynomials $(P_n)_{n\geq 1}$ via \begin{equation}
    P_n(z) := (z - Z_1)\cdots (z - Z_n)\,.
\end{equation} 
Pemantle and Rivin \cite{pemantle2013distribution} introduced this model and conjectured that the critical points of $P_n$ are close to the roots of $P_n$.  More rigorously,  for each fixed $k \in \bN$
let $\nu_n^{(k)}$ to be the empirical measure of $P_n^{(k)}$, i.e. 
\begin{equation}
    \nu_n^{(k)} := \frac{1}{n-k} \sum_{z \in \C : P_n^{(k)}(z) = 0} \delta_{z}\,,
\end{equation}
and let $\mu_n$ denote the \emph{empirical measure} of $P_n$, i.e.\ \begin{equation}
    \mu_n := \frac{1}{n}\sum_{z \in \C : P_n(z) = 0} \delta_{z}\,,
\end{equation} 
where $\delta_y$ denotes the point mass at $y$.

Pemantle and Rivin conjectured that in the case of $k = 1$, we have $\nu_n^{(1)}$ converges weakly to $\mu$; Pemantle and Rivin proved this under the assumption that $\mu$ has finite $1$-energy.  Kabluchko \cite{kabluchko2015critical} proved Pemantle and Rivin's conjecture, showing that $\nu_n^{(1)} \to \mu$ in probability as $n \to \infty$.  Kabluchko's result was extended by Byun, Lee and Reddy \cite{byun-lee-reddy} who proved that for each fixed $k \in \bN$ one has that $\nu_n^{(k)}$ converges weakly to $\mu$ in probability.  Recently, the authors showed the same holds if $k$ grows slightly slower than logarithmically in $n$ \cite{michelen2022zeros}.  The works \cite{byun-lee-reddy} and \cite{michelen2022zeros} on convergence of higher derivatives follow the same general strategy as Kabluchko's original proof \cite{kabluchko2015critical}, which much of the new ingredients coming in to handle an anti-concentration estimate.  For more references on this model and adjacent models, see the works \cite{alazard2022dynamics,cheung2014critical,cheung2015higher,hoskins2020dynamics,hoskins2022semicircle,kabluchko2021repeated,kiselev2022flow,o2021nonlocal,steinerberger2020free,subramanian}
and the references therein.

Cheung-Ng-Yam \cite{cheung2014critical} and independently Kabluchko (see \cite{michelen2022zeros}) conjectured that in fact $\nu_n^{(1)}$ should weakly converge to $\mu$ \emph{almost surely} and not just in probability.  This was recently proven by Angst, Malicet and Poly \cite{angst2023almost}  for all probability measures $\mu$.  Angst, Malicet and Poly  also conjectured \cite{angst2023almost}  that the almost-sure convergence of $\nu_n^{(k)}$ should hold for each fixed $k\in\bN$.  In this short note, we confirm their conjecture.

\begin{theorem}\label{thm:main}
Almost surely with respect to $\P$, for each fixed $k \in \bN$ the sequence of empirical measure $\nu_n^{(k)}$ converges to $\mu$ as $n$ tends to infinity.
\end{theorem}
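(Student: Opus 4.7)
Our approach follows the now-standard potential-theoretic blueprint used in \cite{kabluchko2015critical, byun-lee-reddy, angst2023almost}. Setting
$$L_n^{(k)}(z) := \tfrac{1}{n-k}\log\bigl|P_n^{(k)}(z)\bigr| \quad\text{and}\quad U_\mu(z) := -\int\log|z-w|\,d\mu(w),$$
we reduce almost-sure weak convergence of $\nu_n^{(k)}$ to $\mu$ to the pointwise almost-sure convergence
$$L_n^{(k)}(z) \longrightarrow -U_\mu(z) \quad \text{for Lebesgue-a.e.\ } z \in \C,$$
combined with a standard almost-sure tightness/uniform-integrability argument (cf.\ \cite{kabluchko2015critical, byun-lee-reddy}); the leading coefficient of $P_n^{(k)}$ contributes only $o(1)$ since $\tfrac{1}{n-k}\log(n!/(n-k)!) = O(\tfrac{k\log n}{n})$.

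The main algebraic tool is the identity
$$P_n^{(k)}(z) = k!\, P_n(z)\, e_k\!\left(\frac{1}{z-Z_1},\dots,\frac{1}{z-Z_n}\right),$$
where $e_k$ denotes the $k$-th elementary symmetric polynomial. Substituting yields $L_n^{(k)}(z) = \tfrac{1}{n-k}\log|P_n(z)| + \tfrac{1}{n-k}\log|e_k(\cdots)| + o(1)$. The first summand converges almost surely to $-U_\mu(z)$ for Lebesgue-a.e.\ $z$ by the strong law of large numbers applied to the i.i.d.\ variables $\log|z-Z_i|$. Everything therefore rests on showing
$$\tfrac{1}{n}\log\bigl|e_k(\tfrac{1}{z-Z_1},\dots,\tfrac{1}{z-Z_n})\bigr| \longrightarrow 0 \quad\text{almost surely.}$$
The upper bound $\tfrac{1}{n}\log|e_k| \le o(1)$ is the easier direction: we bound $|e_k| \le \binom{n}{k}\max_i|z-Z_i|^{-k}$ and use Borel-Cantelli together with the regularity of $\mu$ around Lebesgue-a.e.\ $z$ to control $\min_i|z-Z_i|$ from below by $n^{-C}$.

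The heart of the argument is the matching lower bound --- an anti-concentration estimate for $|e_k|$ with summable failure probabilities. Expanding $e_k$ via Newton's identities in terms of the power sums $p_j(z) := \sum_i (z-Z_i)^{-j}$ isolates $p_1^k/k!$ as the dominant term, and the $k=1$ anti-concentration established by Angst, Malicet, and Poly \cite{angst2023almost} controls $|p_1|$ from below with sufficient quantitative strength. The chief new technical ingredient is to rule out that the lower-order correction terms in Newton's expansion, which involve higher power sums $p_2,\dots,p_k$ typically failing to be $\mu$-integrable when $z$ lies inside the support of $\mu$, cancel the leading $p_1^k$ term except on an event of summable probability. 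The plan is to bound the correction terms via moment estimates on the $p_j$'s, combined with the upper control $|z-Z_i| \ge n^{-C}$ (already used in the upper bound) which forces each $|p_j|$ to be at most polynomial in $n$; together with the quantitative lower bound on $|p_1|$ inherited from \cite{angst2023almost}, this should yield the required domination of the leading term.
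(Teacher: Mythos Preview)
Your framework differs from the paper's: you aim for pointwise almost-sure convergence of log-potentials, whereas the paper follows the Angst--Malicet--Poly route through the one-sided $\log_-|\psi(\cdot)|$ inequality on the Riemann sphere and Jensen's formula for the ratio $S_n=P_n^{(k)}/(k!P_n)$. The paper's new ingredient is an anti-concentration bound for the \emph{vector} $(S_n(z_1),\dots,S_n(z_L))$ at $L=2^{k+2}k$ distinct points simultaneously, proved by the Costello--Tao--Vu decoupling trick that converts the multilinear small-ball problem into a linear one of high dimension.

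The genuine gap in your plan is the lower bound via Newton's identities. Writing $e_k=p_1^k/k!+R$ and arguing that the leading term dominates does not work with summable failure probability. Already for $k=2$ one has $e_2=(p_1^2-p_2)/2$; when $\mu$ has a density near $z$, the summands $(z-Z_j)^{-2}$ have tails of index $1$, so $|p_2|/n$ is in the Cauchy regime and $\P(|p_2|>\delta n^2)\asymp (\delta n)^{-1}$, which is not summable. Thus even when $m_\mu(z)\neq 0$ and $|p_1|\sim n$ by the SLLN, the event $\{|p_2|\ge |p_1|^2\}$ has probability of order $n^{-1}$, and on it the reverse triangle inequality gives nothing. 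Moreover, the ``quantitative lower bound on $|p_1|$'' you hope to inherit from \cite{angst2023almost} is not available at a single point: their estimate $\P(|p_1(z)|\le 1)=O(n^{-1/2})$ is not summable, which is exactly why they pass to triples of M\"obius transformations, and their main theorem yields at best $|p_1(z)|\ge e^{-o(n)}$, far too weak to beat polynomial-size corrections. In short, anti-concentration of $e_k$ for $k\ge 2$ is a genuinely multilinear problem (an instance of the polynomial Littlewood--Offord problem, still open even for the sharp single-point rate) and cannot be reduced to the $k=1$ case by isolating $p_1^k$; the paper sidesteps this by taking many points at once and decoupling.
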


Our proof of \cref{thm:main} takes inspiration from Angst, Malicet and Poly's proof of the $k = 1$ case \cite{angst2023almost}; the new ingredient is to handle a non-linear, high-dimensional, multivariate anti-concentration problem via a decoupling approach.  We first outline the general shape of the Angst-Malicet-Poly approach as well as where our contribution comes into play in \cref{ss:outline}.  We then prove our anti-concentration estimate in \cref{sec:anticon} and complete the proof of \cref{thm:main} in \cref{sec:proof}.

\subsection{Outline of the Angst-Malicet-Poly strategy and our contribution}\label{ss:outline}

The main engine behind Angst-Malicet-Poly is a simple-yet-powerful fact about probability measures on the Riemann sphere.  To set up their Lemma, set $\Chat$ to be the Riemann sphere,  let $\cM := \{ \psi(z) = \frac{az + b}{cz + d} \}$ be the set of M\"obius transformations, and let $\lambda_\cM$ be the measure on $\cM$ inherited by setting taking the complex Lebesgue measure on the tuple $(a,b,c,d)$.  Define $\log_- z = | \log z| \one_{z \in [0,1]}$.   Their main engine is the following lemma \cite[Lemma 2.7]{angst2023almost}:

\begin{lemma} \label{lem:AMP-sphere-equality}
    Let $\mhat_1$ and $\mhat_2$ be two probability measures on $\Chat$ so that \begin{equation}\label{eq:AMP-engine-eq}
    \int_{\Chat} \log_- |\psi(z)| \,d\mhat_1(z) \leq \int_{\Chat} \log_- |\psi(z)| \,d\mhat_2(z) \end{equation}
    for almost-every M\"obius transformation $\psi \in \cM$.  Then $\mhat_1 = \mhat_2$.
\end{lemma}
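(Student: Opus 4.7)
The plan is to upgrade the hypothesis to equality of integrals for a.e.\ $\psi$, then extract the masses of Euclidean disks, and conclude by a monotone class argument.

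First, I would exploit the involution $\psi \leftrightarrow 1/\psi$ on $\cM$, which is Lebesgue-preserving at the level of coefficients via $(a, b, c, d) \mapsto (c, d, a, b)$. Applying the hypothesis to $1/\psi$ together with the identity $\log_-(1/t) = \log_+(t)$ yields the companion inequality
\begin{equation*}
\int_{\Chat} \log_+|\psi(z)|\, d\mhat_1(z) \leq \int_{\Chat} \log_+|\psi(z)|\, d\mhat_2(z)
\end{equation*}
for a.e.\ $\psi \in \cM$.

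Next, I would probe with the explicit families $\psi_{w, r}(z) = r(z - w)$ and $\psi^{*}_{w, r}(z) = 1/(r(z - w))$, indexed by $(w, r) \in \C \times (0, \infty)$. For $\psi_{w, r}$, integration by parts (valid when $w$ is not an atom of $\mhat$) gives
\begin{equation*}
\int \log_-|\psi_{w, r}(z)|\, d\mhat(z) = \int_0^{1/r} \mhat(\{|z - w| \leq t\}) \, \frac{dt}{t},
\end{equation*}
and an analogous computation for $\psi^{*}_{w, r}$ yields an expression in terms of the masses of complementary disks $\{|z - w| \geq t\}$. Differentiating the two resulting inequalities in $r$ produces, for a.e.\ $(w, R) \in \C \times (0, \infty)$,
\begin{equation*}
\mhat_1(\{|z - w| \leq R\}) \leq \mhat_2(\{|z - w| \leq R\}) \; \text{ and } \; \mhat_1(\{|z - w| \geq R\}) \leq \mhat_2(\{|z - w| \geq R\}).
\end{equation*}

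Since $\mhat_i(D) + \mhat_i(D^c) = 1$ for both $i = 1, 2$, summing the two inequalities for a disk and its complement forces each to be an equality. Thus $\mhat_1$ and $\mhat_2$ agree on almost every Euclidean open disk, and a standard monotone class argument then yields $\mhat_1 = \mhat_2$.

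The main obstacle I anticipate is the careful tracking of exceptional null sets. The hypothesis holds off a $\lambda_\cM$-null set of $\psi$, whereas the argument above requires the disk-mass inequality off a null set in $(w, R)$, which demands a Fubini-type transfer and some care at atoms of $\mhat$. A related subtlety is that the Euclidean family $\psi_{w, r}$ does not directly probe a possible atom of $\mhat$ at $\infty$; one would augment with a family such as $\psi(z) = c/(z - w)$ whose associated disks contain $\infty$. I expect Angst--Malicet--Poly handle these technicalities through a more symmetric probing scheme tailored to $\Chat$.
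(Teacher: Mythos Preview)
The paper does not prove this lemma; it is quoted from \cite[Lemma~2.7]{angst2023almost} and used as a black box, so there is no in-paper proof to compare against. I will therefore comment only on the internal soundness of your sketch.

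Your high-level strategy---use the coefficient involution $(a,b,c,d)\mapsto(c,d,a,b)$ to obtain the companion $\log_+$ inequality, pass to disk masses via the layer-cake formula, and then exploit that a disk and its complement have masses summing to $1$---is natural. There are, however, two genuine gaps beyond the null-set bookkeeping you flag.

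First, the probing families $\psi_{w,r}(z)=r(z-w)$ and $\psi^{*}_{w,r}(z)=1/(r(z-w))$ lie entirely inside the coordinate hyperplanes $\{c=0\}$ and $\{a=0\}$ of $\C^4$, which are $\lambda_{\cM}$-null. The hypothesis therefore says nothing about these families, and this is not a Fubini technicality one can massage away: you must instead work with generic $\psi$ (with $c\neq 0$), whose sublevel sets $\psi^{-1}(\overline{B(0,t)})$ already sweep out all disks in $\Chat$, and parametrize the argument through those.

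Second, and more seriously, the step ``differentiating the two resulting inequalities in $r$ produces $\mhat_1(\{|z-w|\le R\})\le \mhat_2(\{|z-w|\le R\})$'' is invalid as written: from $F_1(r)\le F_2(r)$ for a.e.\ $r$ one cannot deduce any inequality between $F_1'$ and $F_2'$ (take $F_1\equiv 0$, $F_2(r)=1+\sin r$). Your sum-to-one trick is applied \emph{after} this differentiation, so it does not rescue the step. One needs a different mechanism to pass from the integrated inequalities to pointwise statements about disk masses---for instance by integrating the layer-cake identity against further parameters in $\cM$ and changing variables to a parametrization of the space of disks, rather than by differentiating in a single scalar.
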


An appealing aspect of this Lemma is that it requires only a one-sided bound; further, the space of probability measures on $\Chat$ is compact, and so it will suffice to verify \cref{eq:AMP-engine-eq} where $\mhat_1$ will be an arbitrary cluster point $\nuhat_\infty$ of the sequence $(\nu_n^{(k)})_{n \geq 1}$ and $\mhat_2$ will be the measure $\mu$. 

The route towards establishing \cref{eq:AMP-engine-eq} begins at Jensen's formula.  Letting $C(0,1)$ denote the unit circle, one may apply Jensen's formula to the ratio $S_n := \frac{P_n^{(k)}}{k! P_n}$ to obtain \begin{equation}\label{eq:jensen-intro}
    \sum_{\rho} \log_- |\psi(\rho)| -  \sum_{\zeta} \log_-|\psi(\zeta)| \leq \max_{z \in \psi^{-1}(C(0,1))} \log | S_n(z) |  - \log |S_n(\psi^{-1}(0))|
\end{equation}
where $\{\rho\}$ enumerates the roots of $P^{(k)}$ and $\{\zeta\}$ enumerates the roots of $P$ (see \cref{fact:Jensen}).  Our task then is to control the right-hand side.  The term with the maximum is fairly straightforward to control almost-surely (\cref{lem:upper-bound-potential}), and it is the term $\log |S_n(\psi^{-1}(0)|$ that is more challenging.  In particular, if we set $a = \psi^{-1}(0)$ then \begin{equation} \label{eq:Sn-def-intro}
    S_n(a) = \sum_{1 \leq i_1 < \ldots < i_k \leq n} \frac{1}{a - Z_{i_1}}\cdots \frac{1}{a - Z_{i_k}}\,.
\end{equation}

The case of $k = 1$, this is precisely a sum of i.i.d.\ random variables and so we depending on the distribution of $Z$ and the choice of $a$ we may have that $\P(S_n = 0) = \Theta(n^{-1/2})$.  Since we seek almost sure statements, this is too large to apply Borel-Cantelli.  To get around this issue,  Angst-Malicet-Poly look instead at \emph{triples} of M\"obius transformations.  For most such triples $(\psi_1,\psi_2,\psi_3)$ one has that the vector $(\psi_1^{-1}(0),\psi_2^{-1}(0),\psi_{3}^{-1}(0))$ consists of three distinct complex numbers, say $(a,b,c)$.  The vector $(S_n(a),S_n(b),S_n(c))$ now behaves like a sum of three-dimensional random variables.  In particular, a sufficiently general version of Erd\H{o}s's solution to the Littlewood-Offord problem shows that the probability all coordinates of $(S_n(a),S_n(b),S_n(c))$ are small simultaneously decays like $O(n^{-3/2})$, which is now summable.  An application of Borel-Cantelli will allow one to deduce that almost-surely for generic triples of M\"obius transformations and all large enough $n$ we have at least one of $(S_n(\psi_1^{-1}(0)),S_n(\psi_2^{-1}(0)),S_n(\psi_3^{-1}(0)))$ is at least, say, $1$ in modulus.  Working with a given cluster point $\nuhat_\infty$ of $(\nu_n^{(k)})_{n \geq 1}$ and applying \cref{eq:jensen-intro} together with an application of the law of large numbers to handle the sum over $\{\zeta\}$ will prove \cref{eq:AMP-engine-eq}. 

The main challenge in adapting this approach to fixed $k\in\bN$ is to handle the anti-concentration estimate.  In particular, for fixed $k \geq 2$, handling the quantity  $\P(S_n(a) = 0)$ is a non-linear anti-concentration problem, and major open problems remain in this arena.  As an example, one expects that for each $k \geq 2$ one has $\P(S_n(a) = 0) = O(n^{-1/2})$, but this is only known up to subpolynomial factors \cite{meka2016anti}.  Furthermore, we need to consider vectors of such quantities.  Roughly, for each fixed $k$, we need to take $L$ large enough so that for distinct complex numbers $(z_1,\ldots,z_L)$ we have $$ \sum_{n \geq 1} \P(|S_n(z_1)| \leq 1, \ldots, |S_n(z_L)| \leq 1) < \infty\,. $$

To handle this quantity, we use a \emph{decoupling} approach for anti-concentration.  This was introduced by Costello-Tao-Vu \cite{CTV} in their study of random symmetric matrices and anti-concentration of quadratic forms (see also the survey \cite{nguyen-vu-survey}).  The intuition here is to tackle multilinear anti-concentration problems by comparing them to linear anti-concentration problems, at the cost of decreasing the rate of decay of the resulting bounds.  For us, we need to apply a decoupling lemma to the vector $(S_n(z_1),\ldots,S_n(z_L))$ and handle all coordinates simultaneously in order to obtain a high-dimensional but \emph{linear} anti-concentration problem.  Our main new contribution is the following Lemma: 

\begin{lemma}\label{lem:small-ball-n2}
    Suppose $\mu$ does not have finite support and let $k \in \bN$.  Then for $L  = 2^{k +2}k$ and all pairwise distinct complex numbers $(z_1,\ldots,z_L)$ we have $$\P(|S_n(z_j)| \leq 1 \text{ for all }j \in [L]) \leq \frac{C}{n^2}$$
    where $C > 0$ depends on $k$ and $\mu$.
\end{lemma}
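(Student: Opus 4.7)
The plan is to use an iterated \emph{decoupling} argument to collapse the nonlinear event into a high-dimensional \emph{linear} small-ball problem (at the cost of raising the probability to the power $2^k$), and then to close by a multivariate Erd\H{o}s--Littlewood--Offord (ELO) estimate. Concretely, partition $[n] = B_1\sqcup\cdots\sqcup B_k$ into blocks of size $\Theta(n/k)$, take an independent copy $(Z'_i)_{i\ge 1}$ of $(Z_i)_{i\ge 1}$, and set $Y_i(z):=1/(z-Z_i)$, $Y'_i(z):=1/(z-Z'_i)$, and
$$
s_\ell(z) := \sum_{i\in B_\ell}\bigl(Y_i(z)-Y'_i(z)\bigr),
$$
so that $s_1,\ldots,s_k$ are independent across $\ell$ (they depend on disjoint blocks of variables).

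The \textbf{decoupling step} uses the elementary inequality
$$
\P\bigl(\forall j,\ |f_j(Y)|\le\eps\bigr)^2 \;\le\; \P\bigl(\forall j,\ |f_j(Y) - f_j(Y_{B_\ell^c},\,Y'_{B_\ell})|\le 2\eps\bigr),
$$
which is Cauchy--Schwarz conditional on $Y_{B_\ell^c}$ followed by the triangle inequality. Applying it once per block $\ell=1,\ldots,k$ converts $S_n(z_j) = \frac{1}{k!} e_k(Y(z_j))$-type quantity into its $k$-fold mixed finite difference across the blocks. Since $e_k$ has degree \emph{exactly} $k$, any monomial $Y_{i_1}\cdots Y_{i_k}$ missing an entire block is killed by the corresponding difference operator; only monomials with one index from each block survive, and on each of them the mixed difference telescopes to $\prod_{\ell=1}^k (Y_{i_\ell}-Y'_{i_\ell})$. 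Summing yields the clean identity $\prod_{\ell=1}^k s_\ell(z)$, so that
$$
\P\bigl(\forall j,\ |S_n(z_j)|\le 1\bigr)^{2^k} \;\le\; \P\Bigl(\forall j,\ \prod_{\ell=1}^k|s_\ell(z_j)|\le C_k\Bigr).
$$
A \textbf{pigeonhole} finishes the nonlinear-to-linear reduction: $\prod_\ell|s_\ell(z_j)|\le C_k$ forces some $|s_\ell(z_j)|\le C_k^{1/k}$, so with $L = 2^{k+2}k$ there exist $\ell^*\in[k]$ and $J\subseteq[L]$ with $|J|\ge L/k = 2^{k+2}$ such that $|s_{\ell^*}(z_j)|\le C_k^{1/k}$ for every $j\in J$; the union bound over the $O(1)$ choices of $(\ell^*, J)$ is absorbed into the constant.

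The vector $\bigl(s_{\ell^*}(z_j)\bigr)_{j\in J}\in\C^{|J|}$ is now a sum of $\Theta(n/k)$ i.i.d.\ complex random vectors $\xi_i := \bigl(Y_i(z_j)-Y'_i(z_j)\bigr)_{j\in J}$, and a multivariate \textbf{ELO/Hal\'asz} estimate delivers
$$
\P\bigl(\forall j\in J,\ |s_{\ell^*}(z_j)|\le C_k^{1/k}\bigr) \;\le\; C\,n^{-|J|/2} \;=\; C\,n^{-2^{k+1}},
$$
provided the single-step distribution of $\xi_i$ is nondegenerate. Combining this with the decoupling inequality gives $\P(\forall j,\ |S_n(z_j)|\le 1) \le C'n^{-|J|/2^{k+1}} = C' n^{-2}$, as required.

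The \textbf{main obstacle} is verifying the nondegeneracy hypothesis, and it is exactly here that both the pairwise-distinctness of the $z_j$ and the infinite support of $\mu$ enter. If there were a nonzero $\lambda\in\C^{|J|}$ with $\sum_j\lambda_j\xi_i^{(j)}$ almost surely equal to a constant, the independence of $Z_i$ and $Z'_i$ would force $F(Z):=\sum_j \lambda_j/(z_j-Z)$ to be a.s.\ constant under $\mu$. But $F$ is a nonzero rational function in $Z$ with poles precisely at $\{z_j:\lambda_j\ne 0\}$, so it can agree with a fixed constant only on a finite set---contradicting the infinite support of $\mu$. A secondary technical point is that $\xi_i$ may fail to have finite moments when $z_j\in\operatorname{supp}\mu$; this can be dispatched either by a mild truncation of $Y_i$ (absorbing the truncation error into the constant) or by running Esseen's inequality directly on the characteristic function of $\xi_i$ and using the nondegeneracy above to produce a decay bound $|\widehat{\xi_i}(t)|\le 1-\Omega(\|t\|^2)$ on a neighborhood of the origin.
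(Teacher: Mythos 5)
Your proposal is correct and follows essentially the same path as the paper: decoupling across $k$ blocks (you iterate Cauchy--Schwarz block by block; the paper invokes the Costello--Tao--Vu abstract lemma once, which amounts to the same thing), identifying that the $k$-fold mixed difference of $S_n$ factors into a product $\prod_{\ell}s_\ell(z)$ of independent block sums, pigeonholing to find a single $\ell^*$ and a set $J\subset[L]$ with $|J|\ge L/k=2^{k+2}$, verifying non-degeneracy from the infinite support of $\mu$, and closing with a multivariate Erd\H{o}s--Littlewood--Offord bound of order $n^{-|J|/2}$. Your secondary concern about moments is moot, since the anti-concentration estimate the paper imports (Angst--Malicet--Poly, their Proposition~2.1) requires only non-degeneracy of the i.i.d.\ steps and no moment conditions.
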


We note that increasing $L$ yields an increase in the exponent on $n$ on the right-hand side; we only need the right-hand side to be summable in $n$.   Further, it is plausible that the right-hand side of \cref{lem:small-ball-n2} should be of the order $n^{-L/2}$; however, even in the case of $k = 2$ and $L = 1$ this is a non-trivial instance of a significant open problem known as the quadratic Littlewood-Offord problem (see \cite{costello-quadratic,CTV,kwan-sauermann,meka2016anti}).  Since we only need summability, the sub-optimal bounds attained by decoupling will be strong enough provided we take $L$ large as in \cref{lem:small-ball-n2}.

We note that this approach differs fundamentally from the anti-concentration approach of our previous work \cite{michelen2022zeros}. In \cite{michelen2022zeros} we deduced our anti-concentration estimate from a powerful theorem of Meka-Nguyen-Vu \cite{meka2016anti} which in turn is proven by a sophisticated Gaussian comparison argument.  

The decoupling approach and proof of \cref{lem:small-ball-n2} is handled in \cref{sec:anticon}.  We then prove \cref{thm:main} in \cref{sec:proof}, and import the necessary tools and adapt ideas from \cite{angst2023almost}. 

\subsection{Notation}

Throughout, the random variables $(Z_j)_{j\geq 1}$ are defined on the common probability space $(\P,\mathcal{F},\Omega)\,.$  The random polynomials we consider are defined by $P_n(z) = (z - Z_1)\cdots (z - z_n)$.  The measure $\mu_n$ is the empirical measure of $P_n$ and the measure $\nu_n^{(k)}$ is the empirical measure of $P_n^{(k)}$.   We will make use of the ratio $S_n = \frac{P_n^{(k)}}{k! P_n}$. 

We set $\lambda_\R$ to be the Lebesgue measure on $\R$ and  $\lambda_\C$ to be the Lebesgue measure on $\C$; we write $\Chat$ for the Riemann sphere.  We denote $\cM = \{\psi(z) = \frac{az + b}{cz + d}\}$ for the set of M\"obius transformations and endow $\cM$ with the measure $\lambda_\cM$ induced by the taking the Lebesgue measure $\lambda_\C^{\otimes 4}$ on the tuples $(a,b,c,d)$ defining the M\"obius transformations.  We denote $\log z=\log_{+}z-\log_{-}z$ where
\[ \log_{-} z = \begin{cases} 
	|\log z|, & 0 \leq z \leq 1, \\
	0, & z \geq 1, 
\end{cases}
\quad\text {and }\quad
\log_{+} z = \begin{cases} 
	0, & 0 \leq z \leq 1, \\
	\log z, & z \geq 1,
\end{cases} \]
where $\log_{-}0=+\infty$.  We write $C(a,r)$ for the circle centered at $a \in \C$ of radius $r$.  For $m \in \bN$ we write $[m] = \{1,2,\ldots,m\}$.

\section{Anticoncentration via decoupling} \label{sec:anticon}

The goal of this section is to prove \cref{lem:small-ball-n2}; we begin with the abstract decoupling lemma of Costello, Tao and Vu \cite{CTV}.  
\begin{lemma}\label{lem:CTV}
    Let $(Y_1,\ldots,Y_r)$ be a collection of random variables taking values in an arbitrary measurable space and let $E = E(Y_1,\ldots,Y_r)$ be an event depending on these variables.  Set $(Y_1',\ldots,Y_r')$ to be an independent copy of the collection of random variables $(Y_1,\ldots,Y_r)$ with the same joint distribution.  Then $$\P(E(Y_1,\ldots,Y_r)) \leq \P\left(\bigwedge_{\alpha \subset [r]} E(Y_1^\alpha,\ldots,Y_r^\alpha) \right)^{1/2^r} $$ 
    where $Y_j^\alpha = Y_j$ if $j \in \alpha$ and $Y_j^\alpha = Y_j'$ if $j \notin \alpha$.
\end{lemma}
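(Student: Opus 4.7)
The plan is to prove \cref{lem:CTV} by induction on $r$, with the engine of the induction being a conditional Jensen inequality combined with a single-variable decoupling step that pairs off one fresh copy at a time. First I will isolate the one-variable statement: if $E$ depends on a single variable $Y_1$, then $\P(E(Y_1))^2\le \P(E(Y_1)\wedge E(Y_1'))$. This is immediate because $Y_1$ and $Y_1'$ are independent with the same distribution, so the right-hand side factorizes as $\P(E(Y_1))\P(E(Y_1'))=\P(E(Y_1))^2$; in fact equality holds. The same argument, applied conditionally on an auxiliary random variable $W$, gives
\[
 \P(E(Y_1,W))^{2} \le \P\bigl( E(Y_1,W)\wedge E(Y_1',W)\bigr),
\]
by first writing $\P(E(Y_1,W))=\E[\P(E(Y_1,W)\mid W)]$, applying Jensen to $x\mapsto x^2$, and then using conditional independence of $Y_1$ and $Y_1'$ given $W$.

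For the inductive step, suppose the lemma is known for $r-1$ variables. Given an event $E=E(Y_1,\dots,Y_r)$, I condition on $Y_r$ and view $E_{Y_r}:=E(\cdot,\dots,\cdot,Y_r)$ as an event depending on $Y_1,\dots,Y_{r-1}$ (with $Y_r$ acting as a parameter). Jensen's inequality applied to the convex function $x\mapsto x^{2^{r-1}}$ gives
\[
 \P(E(Y_1,\dots,Y_r))^{2^{r-1}} \le \E_{Y_r}\!\left[\P\bigl(E_{Y_r}(Y_1,\dots,Y_{r-1}) \mid Y_r\bigr)^{2^{r-1}}\right].
\]
Applying the inductive hypothesis conditionally on $Y_r$ and taking expectation in $Y_r$ bounds this by $\P(F)$, where
\[
 F := \bigwedge_{\alpha \subseteq [r-1]} E(Y_1^{\alpha},\dots,Y_{r-1}^{\alpha},Y_r).
\]

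Next I decouple the remaining variable $Y_r$ using the one-variable step with $W=(Y_1,\dots,Y_{r-1},Y_1',\dots,Y_{r-1}')$, which yields $\P(F)^{2}\le \P(F\wedge F')$, where $F'$ is $F$ with $Y_r$ replaced by the independent copy $Y_r'$. The conjunction $F\wedge F'$ splits naturally according to whether $r\in\alpha$ or not, giving exactly $\bigwedge_{\alpha\subseteq[r]} E(Y_1^{\alpha},\dots,Y_r^{\alpha})$. Combining this with the previous inequality produces $\P(E)^{2^r}\le \P\bigl(\bigwedge_{\alpha\subseteq[r]}E(Y_1^{\alpha},\dots,Y_r^{\alpha})\bigr)$, and the claim follows after taking $2^r$-th roots.

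The only real bookkeeping obstacle is making sure the inductive hypothesis is applicable conditionally on $Y_r$: one needs $(Y_1,\dots,Y_{r-1})$ and $(Y_1',\dots,Y_{r-1}')$ to be independent and identically distributed given $Y_r$, which is automatic since $Y_r$ is independent of both tuples and the tuples themselves are i.i.d.\ copies. Everything else is routine manipulation of conditional expectations, Jensen's inequality, and the observation that the index set $[r]$ decomposes as $\{\alpha\subseteq[r-1]\}\sqcup\{\alpha\cup\{r\}:\alpha\subseteq[r-1]\}$, which is what allows the two halves of the conjunction to reassemble into the final product over all $\alpha\subseteq[r]$.
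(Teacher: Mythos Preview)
The paper does not give its own proof of this lemma; it is quoted from Costello--Tao--Vu and used as a black box. Your inductive argument---conditional Jensen to peel off one variable, apply the $(r-1)$-variable case, then a single Cauchy--Schwarz/Jensen step to decouple the last variable---is the standard proof and the manipulations are carried out correctly.

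There is, however, one genuine gap in your bookkeeping paragraph. You assert that ``$Y_r$ is independent of both tuples'', meaning independent of both $(Y_1,\dots,Y_{r-1})$ and $(Y_1',\dots,Y_{r-1}')$. The second independence is indeed guaranteed (the primed and unprimed $r$-tuples are independent), but the first is \emph{not} part of the hypotheses as stated: the lemma only assumes that $(Y_1',\dots,Y_r')$ is an independent copy of $(Y_1,\dots,Y_r)$, not that the coordinates $Y_1,\dots,Y_r$ are mutually independent. Without that, the conditional law of $(Y_1,\dots,Y_{r-1})$ given $Y_r$ need not match the unconditional law of $(Y_1',\dots,Y_{r-1}')$, so your conditional invocation of the inductive hypothesis is unjustified. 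In fact the inequality is simply false at that level of generality: with $r=2$, $Y_1=Y_2$ almost surely and nonconstant, and $E(y_1,y_2)=\{y_1=y_2\}$, one has $\P(E)=1$ while the decoupled conjunction forces $Y_1=Y_2'$ and hence has probability strictly below $1$.

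In the paper's application the $Y_j$ are disjoint blocks $(Z_i)_{i\in R_j}$ of i.i.d.\ variables and are therefore genuinely independent; under that extra hypothesis your argument is complete as written. So the repair is simply to add mutual independence of $Y_1,\dots,Y_r$ to the statement and to point to it explicitly when you claim the inductive hypothesis applies conditionally on $Y_r$.
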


The strategy will be to apply this lemma and subsequently take linear combinations of various versions of $S_n(z_j)$ in order to obtain a linear inequality rather than a multi-linear inequality.   We then will need a high-dimensional (linear) anti-concentration statement which is stated in \cite{angst2023almost}.  A random vector $(X_1,\ldots,X_d) \in \C^d$ is \emph{non-degenerate} if there do not exist complex numbers $\alpha_j, \beta$ so that $$\sum_{j  = 1}^d \alpha_j X_j - \beta = 0$$
almost surely.  This non-degeneracy assumption asserts that $(X_1,\ldots,X_d)$ is genuinely $d$-dimensional.  

\begin{prop}[Proposition 2.1 from \cite{angst2023almost}] \label{prop:anticon-AMP}
    Let $(X^n)_{n\geq 1} = (X_1^n,\ldots,X_d^n)_{n\geq 1}$ be a sequence of i.i.d.\ non-degenerate random vectors taking values in $\C^d$ and set $S_n = \sum_{k = 1}^n X^k$.  Then there is a constant $C$ depending on $d,r$ and the law of $X$ so that for all $n$ we have $$\sup_{x \in \C^d} \P\left(\|S_n - x\| \leq r\right) \leq \frac{C}{n^{d/2}}\,.$$ 
\end{prop}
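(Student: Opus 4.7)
The plan is to apply a multidimensional Esséen-type concentration inequality and reduce the analysis to the local behavior of the characteristic function. Identify $\C^d$ with $\R^{2d}$ and let $\varphi:\R^{2d}\to\C$ denote the characteristic function of $X = X^1$. The standard multidimensional Esséen bound yields
\[
\sup_{x\in\C^d}\P\bigl(\|S_n-x\|\leq r\bigr) \leq C(r,d)\int_{\|t\|\leq 1/r}|\varphi(t)|^n\,dt,
\]
so the task reduces to showing this integral is $O(n^{-d/2})$.

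The key structural input is a rank lower bound: when $X$ is viewed as a random vector in $\R^{2d}$, its real affine span has real dimension at least $d$. Suppose to the contrary that the support of $X$ lies in a real affine subspace $V\subseteq\R^{2d}$ of dimension $k<d$. A $\C$-linear functional $z\mapsto\alpha\cdot z$ on $\C^d$ corresponds to two $\R$-linear functionals on $\R^{2d}$ (its real and imaginary parts), so requiring $\alpha\cdot z$ to be constant on $V$ imposes $2k<2d$ real-linear constraints on the $2d$-dimensional real vector space of complex $\alpha$'s. A nonzero $\alpha$ therefore exists with $\alpha\cdot X=\beta$ almost surely, contradicting complex non-degeneracy. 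Truncating $X-X'$ (with $X'$ an independent copy, so that $X-X'$ is symmetric and bounded) to a sufficiently large ball, the resulting random vector has real covariance matrix of rank $\geq d$, and by construction $|\varphi(t)|^2$ is its characteristic function.

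Let $P$ denote orthogonal projection in $\R^{2d}$ onto the range of this truncated covariance. Using $|\varphi(t)|^2=\E[\cos(t\cdot(X-X'))]$ together with a Taylor expansion on the truncated contribution yields constants $c,\rho>0$ with $|\varphi(t)|^2\leq 1-c\|Pt\|^2$ for $\|t\|\leq\rho$, hence $|\varphi(t)|^n\leq\exp(-cn\|Pt\|^2/2)$ on this ball. A change of variables $t\mapsto t/\sqrt{n}$ shows that the contribution of $\|t\|\leq\rho$ to the Esséen integral is $O(n^{-\mathrm{rank}(P)/2})=O(n^{-d/2})$, the $2d-\mathrm{rank}(P)$ directions transverse to $\mathrm{range}(P)$ contributing only a bounded factor since $t$ is confined to a bounded ball.

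The step I expect to require the most care is the complementary region $\rho\leq\|t\|\leq 1/r$. The set $\{t:|\varphi(t)|=1\}$ is always a closed additive subgroup of $\R^{2d}$; complex non-degeneracy constrains its subspace component to real dimension at most $d$ (by the same counting argument as above), but lattice-type periodicities can still arise when $\mu$ has atoms. A compactness argument together with an analysis of the group structure of $\{|\varphi|=1\}$ inside $\{\|t\|\leq 1/r\}$ should yield $\sup_{\rho\leq\|t\|\leq 1/r}|\varphi(t)|\leq 1-\delta$ for some $\delta>0$, possibly after excising small neighborhoods of the finitely many lattice points falling in that annulus, each of which contributes an additional $O(n^{-d/2})$ term by the same Taylor-expansion analysis as near the origin. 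The far-region contribution is then at worst $O(n^{-d/2})$, and combining the two regions completes the proof.
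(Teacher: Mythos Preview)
The paper does not prove this proposition; it is quoted from \cite{angst2023almost} and used as a black box. Your Ess\'een-inequality approach is the standard route to such small-ball bounds, and the rank argument is correct (indeed it can be said in one line: the constraints $\alpha\cdot w_j=0$ for the $k$ real spanning directions $w_j$ of $V$ are $k$ \emph{complex}-linear equations on $\alpha\in\C^d$, so $k<d$ forces a nonzero solution).

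There is one genuine gap in the far-region sketch. You note that the subspace component $W$ of $G:=\{t:|\varphi(t)|=1\}$ has $\dim_\R W\le d$, but then propose to obtain $\sup_{\rho\le\|t\|\le 1/r}|\varphi(t)|\le 1-\delta$ after excising neighborhoods of \emph{finitely many lattice points}. If $\dim W\ge 1$ this cannot work: $W$ meets the annulus in an uncountable set on which $|\varphi|\equiv 1$. The fix is to use the identity $|\varphi(w+u)|=|\varphi(u)|$ for $w\in W$, $u\in W^\perp$ (immediate from $w\cdot(X-X')=0$ a.s.), which factors the Ess\'een integral as a bounded $W$-volume times $\int_{W^\perp\cap B(0,1/r)}|\varphi(u)|^n\,du$. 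On $W^\perp$ the set $\{|\varphi|=1\}$ is genuinely discrete (it is a closed subgroup with trivial identity component), so your lattice-point excision now applies; and since $|\varphi|^2$ is $G$-periodic, each excised neighborhood contributes the same $O(n^{-\dim W^\perp/2})\le O(n^{-d/2})$ as the origin. With this adjustment the argument goes through.
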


We are now ready to set up the decoupling approach.  Recall that $(Z_j)_{j \geq 1}$ are the i.i.d.\ samples from $\mu$ giving the roots of the polynomials $(P_n)_{n \geq 1}$.  For fixed $n$ and $k$, partition $[n]$ into $k$  disjoint sets $R_1,\ldots,R_k$ with $\lfloor n/k\rfloor \leq |R_j| \leq \lceil n/k \rceil$. For $j \in [k]$ define $Y_j = (Z_i)_{i \in R_j}$.  We now think of the rational function $S_n(z)$ as a function not only of $z$ but also of the quantities $(Y_j)_{j \in [k]}$ and so we write $S_n(z;Y_1,\ldots,Y_k)$ when we want to make this dependence explicit.

Applying \cref{lem:CTV} shows \begin{equation}\label{eq:decouple-application}
\P(|S_n(z_j)| \leq 1 \text{ for all }j \in [L]) \leq \P\left(|S_n(z_j;Y_1^\alpha,\ldots,Y_k^\alpha)| \leq 1 \text{ for all }j \in [L], \alpha \subset [k] \right)^{1/2^k}\,.
\end{equation}
The main use of the decoupling is in the following combinatorial lemma.

\begin{lemma}\label{lem:decouple-consequence}
    Suppose that for all $\alpha \subset [k]$ we have $|S_n(z;Y_1^\alpha,\ldots,Y_k^\alpha)| \leq 1$.  Then $$\prod_{i \in [k]}\left|\sum_{j \in R_i}\left(\frac{1}{z - Z_j} - \frac{1}{z - Z_j'} \right) \right| \leq 2^k\,.$$
\end{lemma}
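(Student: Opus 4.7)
The plan is to recognize the left-hand side as arising from a signed difference operator applied to $S_n$ that isolates the part of $S_n$ which is fully multilinear with respect to the block decomposition $(Y_1, \ldots, Y_k)$. Concretely, set
$$\Delta(z) := \sum_{\alpha \subset [k]} (-1)^{k - |\alpha|}\, S_n\bigl(z;\, Y_1^\alpha, \ldots, Y_k^\alpha\bigr).$$
By the triangle inequality combined with the hypothesis $|S_n(z; Y_1^\alpha, \ldots, Y_k^\alpha)| \leq 1$ for every $\alpha \subset [k]$, we immediately get $|\Delta(z)| \leq 2^k$. So the whole task reduces to establishing the algebraic identity
$$\Delta(z) = \prod_{i=1}^k \left( \sum_{j \in R_i} \left( \frac{1}{z - Z_j} - \frac{1}{z - Z_j'} \right) \right).$$

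To prove this identity, I would expand $S_n(z)$ via \cref{eq:Sn-def-intro} and regroup the sum by the composition $(c_1, \ldots, c_k)$, where $c_i \geq 0$ counts the number of chosen indices lying in $R_i$ (so $c_1 + \cdots + c_k = k$). The partition $[n] = R_1 \sqcup \cdots \sqcup R_k$ decomposes $S_n = \sum_{\vec c} S_n^{\vec c}$, where
$$S_n^{\vec c}(z) = \prod_{i=1}^k e_{c_i}\!\left( \{(z - Z_j)^{-1} : j \in R_i\} \right),$$
with $e_{c_i}$ the $c_i$-th elementary symmetric polynomial (and $e_0 \equiv 1$). Each factor depends only on the block $Y_i$, and when $c_i = 0$ the $i$-th factor is the constant $1$, so $S_n^{\vec c}$ is independent of $Y_i$ in that case.

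If some $c_i = 0$, then pairing each $\alpha$ with $\alpha \triangle \{i\}$ yields opposite signs in $\Delta$ while the values of $S_n^{\vec c}$ agree, so the entire contribution of $S_n^{\vec c}$ to $\Delta(z)$ cancels. Since $c_1 + \cdots + c_k = k$ across exactly $k$ blocks, the only composition with all $c_i \geq 1$ is the balanced one $c_1 = \cdots = c_k = 1$, whose contribution is $S_n^{\vec 1}(z) = \prod_{i=1}^k T_i(z; Y_i)$ with $T_i(z; Y_i) := \sum_{j \in R_i} \frac{1}{z - Z_j}$. For this tensor product across the blocks, the operator $\Delta$ converts it into the product of single-block differences via the elementary identity
$$\prod_{i=1}^k (x_i - y_i) = \sum_{\alpha \subset [k]} (-1)^{k - |\alpha|} \prod_{i \in \alpha} x_i \prod_{i \notin \alpha} y_i,$$
applied with $x_i = T_i(z; Y_i)$ and $y_i = T_i(z; Y_i')$, producing exactly the right-hand side of the claimed identity.

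I do not expect a serious obstacle: the entire content is the observation that $\Delta$ projects onto the top multilinear part of $S_n$ with respect to the block decomposition, and this part consists of a single term by the pigeonhole constraint $\sum c_i = k$ across $k$ blocks. The only bookkeeping is the cancellation of unbalanced compositions, which is immediate from the sign $(-1)^{k-|\alpha|}$.
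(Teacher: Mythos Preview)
Your proposal is correct and follows essentially the same approach as the paper: both define the signed alternating sum over $\alpha\subset[k]$, bound it by $2^k$ via the triangle inequality, and then identify it with the product over blocks by observing that any term missing a block cancels and only the balanced term (one index per block) survives. Your organization via compositions $(c_1,\ldots,c_k)$ and elementary symmetric polynomials is just a repackaging of the paper's direct tuple-by-tuple analysis; in fact your sign convention $(-1)^{k-|\alpha|}$ makes the final identity hold on the nose, whereas the paper's $(-1)^{|\alpha|}$ introduces an immaterial global sign.
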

\begin{proof}
    Define $$h(Y_1,\ldots,Y_k,Y_1',\ldots,Y_k') := \sum_{\alpha \subset [k]}(-1)^{|\alpha|}S_n(z;Y_1^\alpha,\ldots,Y_k^\alpha)\,. $$
    The triangle inequality shows that $|h(Y_1,\ldots,Y_k,Y_1',\ldots,Y_k')| \leq 2^k$\,.  Note that \begin{align*}
        h(Y_1,\ldots,Y_k,Y_1',\ldots,Y_k') = \sum_{\alpha \subset [k]} (-1)^{|\alpha|}\sum_{1 \leq i_1 < i_2 < \ldots < i_k \leq n} \frac{1}{z - Z_{i_1}^\alpha} \cdots \frac{1}{z - Z_{i_k}^\alpha}\,.  
    \end{align*}
    Swapping the sums, we claim that \begin{align*}
        \sum_{\alpha \subset [k]} (-1)^{|\alpha|} \frac{1}{z - Z_{i_1}^\alpha} \cdots \frac{1}{z - Z_{i_k}^\alpha}  = \begin{cases}
            0 & \text{ if } \{i_1,\ldots,i_k\} \cap R_j = \emptyset \text{ for some }R_j; \\
            \prod_{\ell = 1}^k \left(\frac{1}{z - Z_{i_\ell}} - \frac{1}{z - Z_{i_\ell}'}\right) & \text{ otherwise}
        \end{cases}\,.
    \end{align*}
    To see this, assume first that $\{i_1,\ldots,i_k\} \cap R_j = \emptyset \text{ for some }R_j$; then when we sum over $\alpha_j$, the sign changes but the quantity $\frac{1}{z - Z_{i_1}^\alpha} \cdots \frac{1}{z - Z_{i_k}^\alpha}$ does not, thus giving $0$.  Otherwise, we must have that each $R_j$ contains exactly one value from $\{i_1,\ldots,i_k\}$, and thus the sum factors as stated.  This shows $$h(Y_1,\ldots,Y_k,Y_1',\ldots,Y_k') =\prod_{i \in [k]}\sum_{j \in R_i}\left(\frac{1}{z - Z_j} - \frac{1}{z - Z_j'} \right) $$
    as desired.
\end{proof}

We now use \cref{lem:decouple-consequence} to identify a high-dimensional but \emph{linear} anti-concentration event lurking in the right-hand side of \cref{eq:decouple-application}.

\begin{corollary}\label{cor:decouple-consequence}
Suppose $z_1,\ldots,z_L$ satisfy
$|S_n(z_i,Y_1^\alpha,\ldots,Y_k^\alpha)| \leq 1$  for all $i \in [L]$ and $\alpha \subset [k]$.  Then there is some $\ell \in [k]$ and a set $I$ with $|I| \geq L/k$ so that for all $i \in I$ we have $$\left|\sum_{j \in R_\ell}\left(\frac{1}{z_i - Z_j} - \frac{1}{z_i - Z_j'} \right)\right| \leq 2\,. $$
\end{corollary}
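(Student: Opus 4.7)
The plan is to combine \cref{lem:decouple-consequence} with two elementary observations: a geometric-mean argument applied to each $z_i$ individually, followed by pigeonhole over $i \in [L]$.

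First, for each fixed $i \in [L]$, the hypothesis asserts that $|S_n(z_i; Y_1^\alpha, \ldots, Y_k^\alpha)| \leq 1$ for all $\alpha \subset [k]$, which is precisely the hypothesis of \cref{lem:decouple-consequence} applied with $z = z_i$. Therefore
\[
\prod_{\ell \in [k]} \left|\sum_{j \in R_\ell}\left(\frac{1}{z_i - Z_j} - \frac{1}{z_i - Z_j'}\right)\right| \leq 2^k.
\]
Since this is a product of $k$ nonnegative real numbers whose product is at most $2^k$, at least one factor must be at most $2$; otherwise all $k$ factors would strictly exceed $2$ and the product would strictly exceed $2^k$. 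Hence for each $i \in [L]$ we may choose an index $\ell(i) \in [k]$ such that
\[
\left|\sum_{j \in R_{\ell(i)}}\left(\frac{1}{z_i - Z_j} - \frac{1}{z_i - Z_j'}\right)\right| \leq 2.
\]

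Second, the assignment $i \mapsto \ell(i)$ is a function from the set $[L]$ of size $L$ into the set $[k]$ of size $k$. By the pigeonhole principle, some fiber has size at least $L/k$: there exists $\ell \in [k]$ and a set $I := \{i \in [L] : \ell(i) = \ell\}$ with $|I| \geq L/k$. By construction, every $i \in I$ satisfies the desired inequality with this common $\ell$, which proves the corollary.

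There is no real obstacle here: the content has already been extracted in \cref{lem:decouple-consequence}, and the corollary is purely a repackaging via geometric mean plus pigeonhole. The only minor point to note is that the bound $2$ on the right-hand side (rather than, say, $2 + \eps$) comes from the non-strict inequality: if the product of $k$ nonnegative reals is $\leq 2^k$, at least one of them must be $\leq 2$, as strict inequality in every factor would force strict inequality in the product.
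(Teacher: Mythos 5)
Your argument is correct and is essentially identical to the paper's proof: apply \cref{lem:decouple-consequence} at each $z_i$, observe that a product of $k$ nonnegative reals bounded by $2^k$ must have at least one factor bounded by $2$, and then pigeonhole the resulting map $i \mapsto \ell(i)$ from $[L]$ to $[k]$. You have merely spelled out the geometric-mean step more explicitly than the paper does.
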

\begin{proof}
    Apply \cref{lem:decouple-consequence} to note that for each $i \in [L]$
    we can associate some $\ell_i \in [k]$ for which we have $$\left|\sum_{j \in R_{\ell_i}}\left(\frac{1}{z_i - Z_j} - \frac{1}{z_i - Z_j'} \right) \right| \leq 2\,.$$  Since there are only $k$ choices for each $\ell_i$, the pigeonhole principle shows that at least $L/k$ values of $i$ must have the same value of $\ell_i$.
\end{proof}
With an eye towards applying \cref{prop:anticon-AMP}, we confirm non-degeneracy of the summands appearing in \cref{cor:decouple-consequence}:
\begin{fact}\label{fact:generic}
Suppose $\mu$ does not have  finite support.  Let $L \in \bN$ and set $z_1,z_2,\ldots,z_L$ to be pairwise distinct complex numbers.  Let $Z$ and $Z'$ be independent samples from $\mu$.  Then the vector $$\left(\frac{1}{z_1 - Z} - \frac{1}{z_1 - Z'},\ldots, \frac{1}{z_L - Z} - \frac{1}{z_L - Z'} \right)$$ is non-degenerate.
\end{fact}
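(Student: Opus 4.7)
The plan is to argue by contradiction. Suppose the displayed vector is degenerate: there exist $\alpha_1,\ldots,\alpha_L,\beta \in \C$ with $(\alpha_1,\ldots,\alpha_L) \neq 0$ such that, almost surely,
\[
\sum_{i=1}^L \alpha_i\left(\frac{1}{z_i - Z} - \frac{1}{z_i - Z'}\right) = \beta.
\]
Introduce the rational function $f(w) := \sum_{i=1}^L \frac{\alpha_i}{z_i - w}$; then the hypothesis becomes $f(Z) - f(Z') = \beta$ almost surely. The strategy is to deduce that $f$ must be identically zero as a rational function and hence all $\alpha_i$ vanish.

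First I would eliminate $\beta$ using the exchangeability of $(Z,Z')$: since $Z$ and $Z'$ are i.i.d., $f(Z) - f(Z')$ and $f(Z') - f(Z)$ have the same distribution, which forces $\beta = -\beta$ and hence $\beta = 0$. Next, I would argue that $f(Z) = f(Z')$ a.s.\ forces $f$ to be $\mu$-essentially constant. Indeed, because $Z$ and $Z'$ are independent, the law of $(f(Z), f(Z'))$ is a product measure on $\C^2$, and a product measure supported on the diagonal must be a point mass at a single diagonal point. Thus there is some $c \in \C$ with $\mu(\{w : f(w) = c\}) = 1$.

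Now comes the key rigidity step: since $\mu$ does not have finite support, the set $\{w : f(w) = c\}$ is infinite, whereas $f - c$ is a rational function with only finitely many zeros unless it is identically zero; therefore $f \equiv c$ as rational functions. Letting $w \to \infty$ gives $c = \lim_{w \to \infty} f(w) = 0$, so $f \equiv 0$. Finally, since the $z_i$ are pairwise distinct, the partial fraction decomposition of $f$ is unique: the residue of $f$ at each pole $z_i$ equals $-\alpha_i$, and vanishing of $f$ forces $\alpha_i = 0$ for every $i$, contradicting our assumption.

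I do not expect any serious obstacle. The only subtlety is the step from ``$f(Z) = f(Z')$ a.s.\" to ``$f$ is $\mu$-a.s.\ constant,\" which uses independence to reduce to a diagonal-concentration argument; after that, the infinite-support hypothesis converts a $\mu$-a.s.\ statement about a rational function into an identity of rational functions, which is what makes the algebraic conclusion available.
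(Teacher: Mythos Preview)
Your argument is correct and rests on the same core mechanism as the paper's: a nontrivial rational function cannot agree with a constant on an infinite set, so the infinite-support hypothesis forces all $\alpha_i$ to vanish. The paper's route is a bit shorter, though. Rather than first using exchangeability to force $\beta=0$ and then the product-measure-on-the-diagonal argument to make $f(Z)$ $\mu$-a.s.\ constant, it simply conditions on $Z'$: by independence, for $\mu$-almost every fixed value $z'$ of $Z'$ the identity $\sum_j \alpha_j/(z_j - Z) = \beta + \sum_j \alpha_j/(z_j - z')$ holds $\mu$-a.s., and clearing denominators immediately yields a nonzero polynomial of degree at most $L$ (nonzero because its value at $w=z_j$ is $\alpha_j\prod_{i\neq j}(z_i-z_j)\neq 0$ for any $j$ with $\alpha_j\neq 0$) that annihilates $Z$, contradicting infinite support. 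Your exchangeability and diagonal-concentration steps are valid but unnecessary; the conditioning shortcut collapses them into one line.
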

\begin{proof}
    The proof is similar to the case appearing in \cite{angst2023almost}. Seeking a contradiction, suppose that this random vector is degenerate, and so suppose there are (possibly complex) numbers $\alpha_j$ and $\beta$ so that $$\sum_{j = 1}^L \alpha_j\left(\frac{1}{z_j - Z} - \frac{1}{z_j - Z'}\right) = \beta\,. $$
    Reveal $Z'$, and set $$\beta' = \beta - \sum_{j = 1}^L \frac{\alpha_j}{z_j - Z'}\,, $$
    which implies that almost-surely in $Z$ we have $$\sum_{j = 1}^L \frac{\alpha_j}{z_j - Z} = \beta'\,.$$
    Clearing denominators, this implies that $Z$ is the zero of a polynomial of degree at most $L$, which contradicts our assumption that $\mu$ does not have finite support.
\end{proof}

\noindent We are now ready to prove \cref{lem:small-ball-n2}.
\begin{proof}[Proof of \cref{lem:small-ball-n2}]
    By \cref{eq:decouple-application} and \cref{cor:decouple-consequence} there is a set $R_\ell$ with $|R_\ell| \geq \lfloor n / k \rfloor$ and a set $I$ with $|I| \geq L/k$ so that \begin{equation}
        \P\left(|S_n(z_j)| \leq 1 \text{ for all }j \in [L] \right) \leq \P\left(\left|\sum_{j \in R_\ell}\left(\frac{1}{z_i - Z_j} - \frac{1}{z_i - Z_j'} \right)\right| \leq 2 \text{ for all }i \in I  \right)^{1/2^k}\,.
    \end{equation}
    We now apply \cref{prop:anticon-AMP}---noting that the non-degeneracy condition is guaranteed by \cref{fact:generic}---to bound 
    \begin{equation}\label{eq:application-small-ball}
        \P\left(\left|\sum_{j \in R_\ell}\left(\frac{1}{z_i - Z_j} - \frac{1}{z_i - Z_j'} \right)\right| \leq 2 \text{ for all }i \in I  \right) \leq \frac{C}{(n/k)^{L/(2k)}}
    \end{equation}
    where $C$ depends on $L$ and $\mu$.  Recalling $L = 2^{k+2}k$ and combining \cref{eq:decouple-application} with \cref{eq:application-small-ball} completes the proof.
\end{proof}

\section{Main Lemmas and Proof of \cref{thm:main}} \label{sec:proof}

Following the Angst-Malicet-Poly strategy  outlined in  \cref{ss:outline}, recall that we have set $P_n(z) = (z - Z_1)\cdots(z - Z_n)$ to be our random polynomial and $$S_n(z) := \frac{P_n^{(k)}(z) }{k! P_n(z)} = \sum_{1 \leq i_1 < i_2 <\ldots < i_k \leq n} \frac{1}{z - Z_{i_1}}\cdots \frac{1}{z - Z_{i_k}}\,.$$
We begin with the following basic consequence of Jensen's formula, proven in \cite[Prop.~2.2]{angst2023almost}\,.

\begin{fact}\label{fact:Jensen}
    Let $S = Q/P$ where $P$ and $Q$ are two polynomials neither of which has $0$ as a root.  Then for each M\"obius transformation $\psi \in \cM$ we have $$\sum_{\rho} \log_-|\psi(\rho)| - \sum_{\zeta} \log_- |\psi(\zeta)| \leq \max_{z \in \psi^{-1}(C(0,1))} \log |S(z)| - \log |S(\psi^{-1}(0))|$$
    where $\{\rho\}$ enumerates the roots of $Q$ up to multiplicity and $\{\zeta\}$ enumerates the roots of $P$.
\end{fact}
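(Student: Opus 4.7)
The plan is to derive the inequality as a direct application of Jensen's formula after a change of variables by $\psi$. Concretely, I would set $f(w) := S(\psi^{-1}(w))$, which is a rational function in $w$ because $S$ is rational and $\psi^{-1}$ is a M\"obius transformation. Since $\psi : \Chat \to \Chat$ is a bijection, the zeros of $f$ in $\Chat$ are precisely $\{\psi(\rho)\}$ as $\rho$ ranges over the zeros of $Q$ (with the same multiplicity), and the poles of $f$ are precisely $\{\psi(\zeta)\}$ as $\zeta$ ranges over the zeros of $P$.

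Next I would apply the classical Jensen formula to $f$ on the closed unit disk. Under the generic assumption that $f(0) = S(\psi^{-1}(0))$ is nonzero and finite and that $f$ has no zero or pole on $C(0,1)$, Jensen gives
\begin{equation*}
 \log|f(0)| = \frac{1}{2\pi}\int_0^{2\pi} \log|f(e^{i\theta})|\,d\theta + \sum_{|\psi(\rho)|<1}\log|\psi(\rho)| - \sum_{|\psi(\zeta)|<1}\log|\psi(\zeta)|.
\end{equation*}
Recognizing that $-\log|w| = \log_-|w|$ when $|w|<1$ and that $\log_-|w| = 0$ when $|w| \ge 1$, the two finite sums can be extended over all roots of $Q$ and $P$ respectively at no cost, yielding
\begin{equation*}
 \sum_\rho \log_-|\psi(\rho)| - \sum_\zeta \log_-|\psi(\zeta)| = \frac{1}{2\pi}\int_0^{2\pi} \log|S(\psi^{-1}(e^{i\theta}))|\,d\theta - \log|S(\psi^{-1}(0))|.
\end{equation*}

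Finally, I would bound the mean of $\log|f|$ over the unit circle by the maximum of $\log|S|$ over the image curve $\psi^{-1}(C(0,1))$, which is immediate from the mean-value bound and the fact that $\psi^{-1}$ maps $C(0,1)$ bijectively onto $\psi^{-1}(C(0,1))$. Substituting this upper bound into the equation above produces the claimed inequality. There is no real obstacle here; the only thing worth checking is that the required non-degeneracy conditions on $\psi$ (namely that $\psi^{-1}(0)$ is neither a zero of $P$ nor of $Q$, and that no zero or pole of $S$ lies on $\psi^{-1}(C(0,1))$) hold for $\lambda_\cM$-almost every M\"obius transformation, which is exactly the regime in which the lemma will be invoked.
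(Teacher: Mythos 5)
The paper does not actually supply a proof of this statement; it cites \cite[Prop.\ 2.2]{angst2023almost} and uses it as a black box, so there is no in-paper argument to compare against. Your change-of-variables-plus-Jensen argument is the natural one and is essentially the standard proof.

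One genuine gap worth flagging. You assert that ``the zeros of $f$ in $\Chat$ are precisely $\{\psi(\rho)\}$ and the poles of $f$ are precisely $\{\psi(\zeta)\}$.'' That is not quite right: as a map on $\Chat$, the rational function $S = Q/P$ may have an additional zero or pole at $\infty$, of order $|\deg P - \deg Q|$, and this gets carried by $\psi$ to a finite point $\psi(\infty)$ which can land in the open unit disk. Your Jensen bookkeeping omits this term. When $\deg Q \le \deg P$ the omitted feature is a zero of $f$, contributing a nonpositive quantity $(\deg P - \deg Q)\log|\psi(\infty)|$ to the right-hand side of Jensen's equality; dropping it only strengthens the claimed $\le$, so your conclusion survives, and this is exactly the regime used in the paper (there $Q = P_n^{(k)}/k!$ has degree $n-k < n = \deg P$). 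However, if $\deg Q > \deg P$ the omitted feature is a pole of $f$ in the disk, and discarding it would push the inequality the wrong way; in that case the statement as literally written is false for $\psi$ with $|\psi(\infty)| < 1$. So you should either add the hypothesis $\deg Q \le \deg P$ (which is surely intended and which the application satisfies) or explicitly account for the point at infinity. With that caveat, your argument is correct and takes the expected route.
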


Recalling that M\"obius transformations map circles to circles, we aim to handle the max term in \cref{fact:Jensen} first.  Set $C(a,r)$ to be the circle of radius $r$ centered at $a$.  
\begin{lemma}\label{lem:upper-bound-potential}
    There is a set $\cE \subset \Omega \times \C \times \R$ with $\P \otimes \lambda_\C \otimes \lambda_\R (\cE^c) = 0$ so that for all $(\omega,a,r) \in \cE$ we have $$\max_{z \in C(a,r)} \log_+ |S_n(z)| = O(\log n) \,.$$ 
\end{lemma}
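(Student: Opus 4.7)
The plan is to reduce the bound to a lower bound on $\min_j \operatorname{dist}(Z_j, C(a,r))$. Indeed, directly from the defining sum
\[ |S_n(z)| \le \sum_{1 \le i_1 < \cdots < i_k \le n} \prod_{\ell=1}^k \frac{1}{|z - Z_{i_\ell}|} \le \binom{n}{k} \bigl(\min_{j \le n} |z - Z_j|\bigr)^{-k}, \]
so if we can produce, for a.e.\ $(\omega, a, r)$, a constant $C > 0$ and a random integer $N(\omega,a,r)$ past which $\min_{j \le n} \operatorname{dist}(Z_j, C(a,r)) \ge n^{-C}$, then on $C(a,r)$ we get $|S_n(z)| \le n^{(1+C)k}$ and therefore $\max_{z \in C(a,r)} \log_+|S_n(z)| = O(\log n)$. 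The whole task is to produce such an $N$.

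To do so I would use Borel--Cantelli in the product space $(\Omega \times \C \times \R, \P \otimes \lambda_\C \otimes \lambda_\R)$. Fix compact sets $K_a \subset \C$ and $K_r \subset \R$, choose a constant $C > 2$, and define
\[ \cB_n := \bigl\{(\omega,a,r) \in \Omega \times K_a \times K_r : \min_{j \le n} \bigl||Z_j(\omega) - a| - r\bigr| < n^{-C}\bigr\}. \]
For each fixed $\omega$, $j$, and $a$, the set of $r$ satisfying $\bigl||Z_j(\omega) - a| - r\bigr| < n^{-C}$ is an interval of length at most $2 n^{-C}$. Integrating first in $r$, then in $a$, and then over $\Omega$, Fubini yields
\[ (\P \otimes \lambda_\C \otimes \lambda_\R)(\cB_n) \le 2 n \cdot n^{-C}\, \lambda_\C(K_a) = 2 n^{1-C}\, \lambda_\C(K_a), \]
which is summable in $n$ because $C > 2$. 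Borel--Cantelli therefore gives a full-measure subset of $\Omega \times K_a \times K_r$ on which $(\omega,a,r) \notin \cB_n$ for all sufficiently large $n$. Exhausting $\C \times \R$ by countably many such compacts extends this to full measure in $\Omega \times \C \times \R$.

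For $(\omega, a, r)$ in the resulting full-measure set, we have the desired asymptotic bound past the random threshold $N(\omega,a,r)$, and the finitely many remaining $n$ contribute only a finite additive constant, because outside an additional null set no $Z_j$ lies exactly on $C(a,r)$ (for each $j$, the condition $|Z_j - a| = r$ cuts out a two-dimensional subset of the three-dimensional space $\C \times \R$ and there are only countably many $j$). I do not foresee a serious obstacle: everything is a straightforward combination of Fubini and Borel--Cantelli once the crude bound $|S_n(z)| \le \binom{n}{k}(\min_j |z-Z_j|)^{-k}$ has been used to convert the analytic task into a geometric one. The only mild technical point is the handling of the unbounded parameter space $\C \times \R$, which the countable exhaustion dispatches.
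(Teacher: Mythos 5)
Your proof is correct, and it takes a genuinely different route from the paper's. The paper proves the lemma by first showing, via a Fubini computation, that $\E\bigl[\,|r-|Z_1||^{-1/2}\,\bigr]<\infty$ for $\lambda_\R$-almost every $r$, and then invoking the strong law of large numbers on $\sum_{j\le n}|r-|Z_j||^{-1/2}=O(n^{1/2})$, feeding this into the elementary bound $\max_{z\in C(0,r)}|S_n(z)|\le n^k\bigl(\sum_j |r-|Z_j||^{-1/2}\bigr)^{2k}$ and finally translating to handle general $a$. You instead use the cruder bound $|S_n(z)|\le \binom{n}{k}\bigl(\min_{j\le n}|z-Z_j|\bigr)^{-k}$ and run a Borel--Cantelli argument directly in the product space $\Omega\times\C\times\R$ to show that $\min_{j\le n}\operatorname{dist}(Z_j,C(a,r))\ge n^{-C}$ eventually, which gives $\log_+|S_n|=O(\log n)$ at once. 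Both arguments reduce to a Fubini-type estimate in the auxiliary variable, but they lean on different tools: the paper trades a one-sided moment bound for the SLLN, while you trade a measure-of-tube estimate for Borel--Cantelli. Your version is arguably more elementary (no law of large numbers, no moment computation) and handles $a$ and $r$ simultaneously rather than by translation; the paper's version gives a slightly more structured control on the sum $\sum_j |r-|Z_j||^{-1}$ itself. One small remark: the extra null set you remove to guarantee no $Z_j$ lies exactly on $C(a,r)$ is actually redundant --- once $\min_{j\le N}\bigl||Z_j-a|-r\bigr|\ge N^{-C}>0$ at the threshold $N$, the same lower bound holds for all $n\le N$ since the minimum is over fewer indices --- but including it costs nothing.
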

\begin{proof}
First consider the case $a=0$.  Note that \begin{align*}
    \int_{\C}\int_{\R} \frac{1}{|r - |z||^{1/2}} \one_{|r - |z|| \leq 1} \,d\lambda_{\R}(r) \,d\mu(z) = \int_{\C} 4 \,d\mu(z) = 4 < \infty
\end{align*}
implying that for $\lambda_\R$-almost-every $r > 0$ we have \begin{equation}\label{eq:r-expectation}
    \E\left[\frac{1}{|r - |Z_1||^{1/2}} \right] \leq 1 + \E\left[\frac{1}{|r - |Z_1||^{1/2}}\one_{|r - Z| \leq 1} \right] < \infty\,.
\end{equation} 
Bound  \begin{equation} \label{eq:bound-log-deriv}
\max_{z \in C(0,r)} |S_n(z)| \leq \left(\sum_{j = 1}^n \frac{1}{|r - |Z_1||} \right)^k \leq n^k \left(\sum_{j = 1}^n \frac{1}{|r - |Z_j||^{1/2}}\right)^{2k}
\end{equation}
and note that for $r$ satisfying \cref{eq:r-expectation} the strong law of large numbers implies that $\P$-almost-surely we have \begin{equation} \label{eq:SLLN-application}
    \sum_{j = 1}^n \frac{1}{|r - |Z_j||^{1/2}} = O(n^{1/2})
\end{equation}
where the implicit constant depends on the instance $\omega \in \Omega$ as well as $r \in \R$\,.  Combining \cref{eq:bound-log-deriv} with \cref{eq:SLLN-application} shows that for $\lambda_\R$-almost-every $r > 0$ and $\P$-almost-surely we have 
\begin{equation}
    \max_{z \in C(0,r)} \log_+ |S_n(z)| = O(\log n)\,.
\end{equation}
To show the same for arbitrary $a \in \C$, simply replace $Z$ with the random variable $Z-a$ to reduce to the case of $a = 0$.  This shows that for every $a \in \C$ there are sets $\Omega_a,U_a$ with $\P(\Omega_a^c) = \lambda_\R(U_a^c) = 0$ so that for all $\omega \in \Omega_a, r \in U_a$ we have that the triple $(a,\omega,r)$ satisfies the hypotheses of the Lemma.  An application of the Fubini-Tonelli theorem completes the proof.
\end{proof}

An application of \cref{lem:small-ball-n2} will allow us to handle the remaining term on the right-hand side of \cref{fact:Jensen}.
\begin{lemma}\label{lem:anti-con-application}
    For each $k \in \bN$, set $L = 2^{k + 2}k$.  Then there is a set $\cE \subset \Omega \times \C^L$ with $\P \otimes \lambda_{\C}^{\otimes L}(\cE^c) = 0$ so that for all $(\omega,z_1,\ldots,z_L) \in \cE$ and all $n$ large enough there is at least one $j \in [L]$ so that $|S_n(z_j)| \geq 1$ (with the convention $|S_n(z)| = +\infty$ if $z$ is a pole of $S_n$).
\end{lemma}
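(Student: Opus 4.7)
The plan is to derive \cref{lem:anti-con-application} from the summable anti-concentration bound \cref{lem:small-ball-n2} via a Borel-Cantelli argument carried out on the product space $\Omega \times \C^L$ equipped with the ($\sigma$-finite) product measure $\P \otimes \lambda_\C^{\otimes L}$.  I will restrict attention to the case where $\mu$ has infinite support, so that \cref{lem:small-ball-n2} applies; the finite-support case will need to be handled by a separate, more direct argument (essentially because in that case $P_n^{(k)}$ and $P_n$ have an explicit form coming from the multinomial concentration of the $Z_j$ onto the atoms of $\mu$).

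Let $B_n \subset \Omega \times \C^L$ denote the event $\{|S_n(z_j)| \le 1 \text{ for all } j \in [L]\}$.  The conclusion is equivalent to saying that $\limsup_n B_n$ has $\P \otimes \lambda_\C^{\otimes L}$-measure zero, after removing the measure-zero sets where the $z_j$ fail to be pairwise distinct or where some $z_j$ coincides with a root of some $P_n$ (the latter being a countable union of complex hyperplanes, hence of Lebesgue measure zero).

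The plan then proceeds by exhaustion.  Fix compact sets $K_m = \overline{B(0,m)}^L \uparrow \C^L$ and note that the restriction of $\P \otimes \lambda_\C^{\otimes L}$ to $\Omega \times K_m$ is finite.  Since the constant $C$ in \cref{lem:small-ball-n2} depends only on $k$ and $\mu$---crucially uniform in $(z_1,\ldots,z_L)$ among pairwise distinct tuples---Fubini-Tonelli gives
$$
\bigl(\P \otimes \lambda_\C^{\otimes L}\bigr)\bigl(B_n \cap (\Omega \times K_m)\bigr) \;\le\; \frac{C \, \lambda_\C^{\otimes L}(K_m)}{n^2}.
$$
Summing in $n$ produces a finite sum, so Borel-Cantelli applied to the finite measure space $(\Omega \times K_m, \P \otimes \lambda_\C^{\otimes L}|_{K_m})$ yields that $\limsup_n B_n$ has measure zero on $\Omega \times K_m$.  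A countable union over $m$ gives the conclusion on all of $\Omega \times \C^L$.

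The only real obstacle is having the summable bound $C/n^2$, which is exactly the purpose of \cref{lem:small-ball-n2}; everything else is routine measure theory.  The one subtlety is that $\P \otimes \lambda_\C^{\otimes L}$ is not a probability (or even finite) measure, but the compact exhaustion sidesteps this cleanly, and the uniformity of $C$ in the $z_j$'s is precisely what makes the exhaustion work with an integrand that does not blow up over $K_m$.
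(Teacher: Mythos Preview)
Your proof is correct and follows essentially the same strategy as the paper: combine the summable bound from \cref{lem:small-ball-n2} with Borel--Cantelli and Fubini--Tonelli. The paper simply reverses the order---applying Borel--Cantelli on $\Omega$ for each fixed pairwise-distinct tuple $(z_1,\ldots,z_L)$ and then invoking Fubini--Tonelli---which sidesteps the compact exhaustion and does not actually require the constant $C$ in \cref{lem:small-ball-n2} to be uniform in the $z_j$.
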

\begin{proof}
    For each $L$-tuple $(z_1,\ldots,z_L)$ of pairwise distinct points, \cref{lem:small-ball-n2} and the Borel-Cantelli lemma show that almost surely for sufficiently large $n$ we have $|S_n(z_j)| \geq 1$ for at least one $j \in [L]$.  Since the set of pairwise distinct $L$-tuples has complement of measure $0$ under the measure $\lambda_{\C}^{\otimes L}$, an application of the Fubini-Tonelli theorem completes the proof. 
\end{proof}

Finally, the strong law of large numbers will control the sum over $\{\zeta\}$:

\begin{fact}\label{fact:LLN-application}
    There is a set $\cE \subset \Omega \times \cM$ with $\P \otimes \lambda_\cM(\cE^c) = 0$ so that for all $(\omega,\psi) \in \cE$ we have $$\lim_{n \to \infty} \int_{\C} \log_-|\psi(z)|\,d\mu_n(z) = \int_{\C} \log_-|\psi(z)|\,d\mu(z)\,.$$
\end{fact}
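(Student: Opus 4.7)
The plan is to recognize the left-hand side as a Cesàro average of i.i.d.\ non-negative random variables, apply the strong law of large numbers for each fixed $\psi \in \cM$, and upgrade the resulting pointwise-in-$\psi$ almost sure statement to joint almost sure convergence on $\Omega \times \cM$ via Fubini-Tonelli. Concretely, since $\mu_n = \frac{1}{n}\sum_{j=1}^n \delta_{Z_j}$, I would rewrite
$$\int_\C \log_-|\psi(z)|\,d\mu_n(z) = \frac{1}{n}\sum_{j=1}^n \log_-|\psi(Z_j)|,$$
which exhibits the integral as an empirical mean of i.i.d.\ non-negative random variables whose common expectation equals $\int_\C \log_-|\psi(z)|\,d\mu(z) \in [0,+\infty]$.

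For each fixed $\psi \in \cM$ the strong law of large numbers---valid for i.i.d.\ non-negative variables even when the mean is infinite, via truncation at level $M$ followed by letting $M \to \infty$ and using monotone convergence---gives that $\P$-almost surely
$$\lim_{n\to\infty}\int_\C \log_-|\psi(z)|\,d\mu_n(z) = \int_\C \log_-|\psi(z)|\,d\mu(z),$$
where both sides are allowed to take the value $+\infty$.

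To pass to the joint statement, I would let $\cE$ denote the set of $(\omega,\psi) \in \Omega \times \cM$ on which the above limit holds; joint measurability of $(\psi,z) \mapsto \log_-|\psi(z)|$ makes $\cE$ a measurable subset. The previous paragraph shows that each $\psi$-fibre of $\cE^c$ has $\P$-measure zero, so Fubini-Tonelli applied to $\mathbf{1}_{\cE^c}$ yields $\P \otimes \lambda_\cM(\cE^c) = 0$, as required.

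There is no substantial obstacle here; the argument is essentially routine. The only subtlety worth flagging is that $\int_\C \log_-|\psi(z)|\,d\mu(z)$ may well be infinite for various $\psi$ (most visibly if $\mu$ has an atom at $\psi^{-1}(0)$), but since we work throughout in $[0,+\infty]$ this causes no problem, and in any case the conclusion need only hold for $\lambda_\cM$-a.e.\ $\psi$.
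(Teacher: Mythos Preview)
Your proposal is correct and follows essentially the same approach as the paper: rewrite the integral against $\mu_n$ as the empirical average $\frac{1}{n}\sum_{j=1}^n \log_-|\psi(Z_j)|$, apply the strong law of large numbers for each fixed $\psi$, and then pass to the joint statement via Fubini--Tonelli. If anything, your write-up is more careful than the paper's, which leaves the Fubini step and the possibility of an infinite mean implicit.
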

\begin{proof}
     By definition, we have $\int_{\C} \log_{-} |\psi(z)| d\mu_n(z)=\frac{1}{n}\sum\limits_{k=1}^{n}\log_- |\psi(Z_k(z))|\,d\mu(z)$. By the strong law of large numbers, we have that almost surely
   \begin{equation*}
       \lim_{n \to \infty}  \frac{1}{n}\sum\limits_{k=1}^{n}\log_{-} |\psi(Z_k(z))| = \int_{\C} \log_{-}|\psi(z)|\,d\mu(z)\,.  \qedhere
   \end{equation*}
\end{proof}

We are now ready to verify the assumptions of \cref{lem:AMP-sphere-equality} for cluster points of our sequences of random measures.

\begin{lemma}\label{lem:cluster-inequality}
    Suppose $\mu$ does not have finite support.  There is a set $\cE \subset \Omega \times \cM$ with $\P \otimes \lambda_{\cM}(\cE^c) = 0$ so that the following holds.  For every cluster point $\nuhat_\infty$ of the sequence of empirical probability measures $(\nu_n^{(k)})_{n\geq 1}$ we have $$\int_{\Chat} \log_-|\psi(z)| \,d\nuhat_\infty(z) \leq \int_{\C} \log_- |\psi(z)|\,d\mu(z)$$
    for all $(\omega,\psi) \in \cE$\,.
\end{lemma}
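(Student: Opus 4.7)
Following the strategy outlined in \cref{ss:outline}, the plan is to combine Jensen's formula (\cref{fact:Jensen}) with the potential bound (\cref{lem:upper-bound-potential}), the anti-concentration estimate (\cref{lem:anti-con-application}), and the strong law of large numbers (\cref{fact:LLN-application}). Setting $L = 2^{k+2}k$, I will take $\cE$ to be the $\P \otimes \lambda_{\cM}$-full measure set of pairs $(\omega,\psi)$ at which both the LLN and potential-bound conclusions hold, and for which Fubini--Tonelli guarantees that $\lambda_\cM^{\otimes(L-1)}$-almost every auxiliary tuple $(\psi_2,\ldots,\psi_L)$ satisfies the anti-concentration conclusion of \cref{lem:anti-con-application} at $(\omega,\psi,\psi_2,\ldots,\psi_L)$ with each $\psi_j$ also in the LLN/potential good sets.

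Fix $(\omega,\psi) \in \cE$ and a cluster point $\nuhat_\infty$ realized by $\nu_{n_j}^{(k)} \to \nuhat_\infty$. Applying \cref{fact:Jensen} to $S_n$ at $\psi$, dividing by $n$, and taking $\liminf$ along $n_j$---using lower semicontinuity of $\log_-|\psi|$ for the $\nu_n^{(k)}$-integral, \cref{fact:LLN-application} for the $\mu_n$-integral, and \cref{lem:upper-bound-potential} to kill the max term---yields
\[\int_{\Chat}\log_-|\psi|\,d\nuhat_\infty - \int_{\C}\log_-|\psi|\,d\mu \le -\liminf_{j\to\infty}\tfrac{1}{n_j}\log|S_{n_j}(\psi^{-1}(0))|.\]
If the desired inequality fails, the left-hand side equals some $D > 0$, forcing $|S_{n_{j_i}}(\psi^{-1}(0))| \le e^{-Dn_{j_i}/2} < 1$ along some further sub-subsequence $n_{j_i}$.

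To convert this into a contradiction, pick $(\psi_2,\ldots,\psi_L)$ from the good set provided by membership of $(\omega,\psi)$ in $\cE$ and apply \cref{lem:anti-con-application}: along $n_{j_i}$ there is an index $\ell(n_{j_i}) \in [L]$ with $|S_{n_{j_i}}(\psi_{\ell(n_{j_i})}^{-1}(0))| \ge 1$, and this index must lie in $\{2,\ldots,L\}$. Pigeonhole then yields a further sub-subsequence along which $\ell$ is constantly some $\ell^*$, and re-running the Jensen argument for $\psi_{\ell^*}$ (now with $\log|S_n(\psi_{\ell^*}^{-1}(0))| \ge 0$ built in) produces the inequality for $\psi_{\ell^*}$ at $\nuhat_\infty$. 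This alone does not contradict the failure for $\psi$, but a Fubini argument on $L$-tuples closes the loop for each fixed cluster point $\nuhat_\infty$: if the set of ``bad'' $\psi$ at which the inequality fails at $\nuhat_\infty$ had positive $\lambda_\cM$-measure, then a positive $\lambda_\cM^{\otimes L}$-measure of $L$-tuples would lie entirely in this bad set, yet the argument above extracts a non-bad $\psi_{\ell^*}$ from such a tuple, a contradiction. The main technical point, and the step I expect to require the most care, is aggregating these per-cluster-point measure-zero bad sets into a single full-measure $\cE$ working uniformly for every cluster point (a priori an uncountable family); this is handled by exploiting the compactness of $\cP(\Chat)$ together with the lower semicontinuity of $\nuhat \mapsto \int \log_-|\psi|\,d\nuhat$.
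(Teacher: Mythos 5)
Your proof follows essentially the same route as the paper: Jensen's formula (\cref{fact:Jensen}) combined with the potential bound (\cref{lem:upper-bound-potential}), the anti-concentration estimate (\cref{lem:anti-con-application}), the strong law of large numbers (\cref{fact:LLN-application}), and a Fubini argument over $L$-tuples of M\"obius transformations. Your bookkeeping via $\liminf \frac{1}{n}\log|S_n(\psi^{-1}(0))|$ and the contradiction threshold $D>0$ is cosmetically different from, but equivalent to, the paper's direct thinning of the subsequence so that item~(2) of its proof holds for a single index.

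The one genuine difference is that you are more explicit than the paper about the final aggregation step, and I think correctly so. The index $j$ (your $\ell^*$) produced by the Jensen--Fubini argument depends on the cluster point $\nuhat_\infty$ through the thinned subsequence, so the argument directly delivers a $\lambda_\cM$-null bad set \emph{per cluster point}. The paper instead defines a single set $E$ via an existential over all cluster points and asserts $\lambda_{\cM}^{\otimes L}(E^L)=0$ in one stroke; this elides the fact that distinct coordinates of a tuple in $E^L$ could fail for \emph{different} cluster points, in which case no immediate contradiction results. Your proposed repair — first obtain a null bad set for each fixed $\nuhat\in K(\omega)$, then pass to all of the (compact, hence separable) set of cluster points using a countable dense subset and the weak lower semicontinuity of $\nuhat\mapsto\int\log_-|\psi|\,d\nuhat$ — does the job cleanly, since for $\nuhat_n\to\nuhat$ with each $\nuhat_n$ in the dense set one gets $\int\log_-|\psi|\,d\nuhat\leq\liminf_n\int\log_-|\psi|\,d\nuhat_n\leq\int\log_-|\psi|\,d\mu$. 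So this is the same approach as the paper, carried out a bit more carefully on the last step.
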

\begin{proof}
By combining \cref{lem:upper-bound-potential}, \cref{lem:anti-con-application}, and \cref{fact:LLN-application}  we see that there is a set $\Omega_1$ with $\P(\Omega_1) = 1$ for which the following holds: for $\lambda_{\cM}^{\otimes L}$-almost-every tuple of M\"obius transformations $(\psi_1,\ldots,\psi_L)$ we have \begin{enumerate}
    \item $\max_{z \in \psi_j^{-1}(C(0,1))} \log |S_n(z)| = O(\log n)$ for all $j \in [L]$. \label{it:logn}
    \item For each $n$ large enough, there is some $j \in [L]$ so that $\log |S_n(\psi_j^{-1}(0)| \geq 0$.  \label{it:anti-con}
    \item $\lim_{n \to \infty} \int_{\C} \log_- |\psi_j(z)| \,d\mu_n(z) = \int_{\C} \log_- |\psi_j(z)| \,d\mu(z)$ for all $j \in [L]$. \label{it:LLN} 
\end{enumerate}
We note that for \cref{it:anti-con} we use the fact that for $\lambda_{\cM}^{\otimes L}$-almost-every tuple $(\psi_j)_{j = 1}^L$, the values $(\psi_j^{-1}(0))_{j = 1}^L$ are pairwise distinct.  
Fix an instance $\omega \in \Omega_1$ and a tuple $(\psi_1,\ldots,\psi_L)$ for which the above three items hold.  
Combining Jensen's formula \cref{fact:Jensen} along with \cref{it:logn} and \cref{it:anti-con}, we see that for each $n$ sufficiently large there is some $j \in L$ for which we have 
 \begin{align}\label{eq:combine-items}
     \left(1 - \frac{k}{n}\right) \int_{\C} \log_-|\psi_j(z)| d\nu_n^{(k)}(z) \le \int_{\C} \log_-|\psi_j(z)| \,d\mu_n(z)  +O\left(\frac{\log n}{n}\right)\,.
\end{align}
By \cref{it:LLN} we have that  \begin{equation}\label{eq:mu_nlim}
  \lim_{n\to \infty} \int_{\C} \log_-|\psi_j(z)| \,d\mu_n(z) =\int_{\C} \log_-|\psi_j(z)| \,d\mu(z)\,.
 \end{equation}

For a cluster point $\nuhat_\infty$, there exists a subsequence $(\nu_{n_i})_{i \geq 1}$ converging to $\nuhat_\infty$.  By truncating the $\log_-$ and applying the monotone convergence theorem we see that for each $j \in [L]$ we have \begin{equation}\label{eq:apply-cluster}
    \int_{\Chat} \log_-|\psi_j(z)| \,d\nuhat_\infty(z) \leq \limsup_{i \to \infty} \int_{\C} \log_-|\psi_j(z)| \,d\nu_{n_i}^{(k)}(z)\,.
\end{equation}

Thinning our subsequence further so that \cref{it:anti-con} holds for a single $j \in [L]$ for all $(n_i)_{i \geq 1}$, we combine \cref{eq:combine-items}, \cref{eq:mu_nlim} and \cref{eq:apply-cluster} to obtain \begin{equation}\label{eq:cluster-ineq}
\int_{\Chat} \log_-|\psi_j(z)| \,d\nuhat_\infty(z) \leq \int_{\C} \log_- |\psi_j(z)|\,d\mu(z)\,.\end{equation}
Defining the set $E \subset \cM$ (depending on the instance $\omega \in \Omega_1$) via \begin{equation*}
    E := \left\{ \psi \in \cM : \exists~\text{cluster point }\nuhat_\infty \text{ with } \int_{\Chat} \log_-|\psi_j(z)| \,d\nuhat_\infty(z) > \int_{\C} \log_- |\psi_j(z)|\,d\mu(z)\right\}
\end{equation*}
we see that \cref{eq:cluster-ineq} shows that $\lambda_{\cM}^{\otimes L}(E^L) = 0$ where $E^L$ is the $L$-fold cartesian product of $E$ with itself; this implies that $\lambda_\cM(E) = 0$ as desired.
\end{proof}

\begin{proof}[Proof of \cref{thm:main}]
    First note that if $\mu$ has finite support, then the theorem follows immediately by the strong law of large numbers.  As such, we will assume throughout that $\mu$ does not have finite support.  By \cref{lem:cluster-inequality}, there is a set $\Omega_1$ with $\P(\Omega_1) = 1$ so that for each $\omega \in \Omega_1$, for $\lambda_\cM$-almost-all $\psi \in \cM$ we have $$\int_{\Chat} \log_-|\psi(z)| \,d\nuhat_\infty(z) \leq \int_{\C} \log_- |\psi(z)|\,d\mu(z)\,. $$  By \cref{lem:AMP-sphere-equality}, this implies that each such cluster point satisfies $\nuhat_\infty = \mu$.  This shows that each subsequence of $\{\nu_n^{(k)}\}_{n \geq 1}$ contains a further subsequence that converges to $\mu$, thus showing that for each $\omega \in \Omega_1$ we have $\nu_n$ converges to $\mu$. 
\end{proof}

\section*{Acknowledgments}
\noindent Both authors are supported in part by NSF grant DMS-2137623.

\bibliographystyle{abbrv}
\bibliography{main.bib}

\end{document}